\theoremstyle{plain}
\newtheorem{teor}{Theorem}[section]
\newtheorem{lem}[teor]{Lemma}
\newtheorem{prop}[teor]{Proposition}
\theoremstyle{definition}
\newtheorem{deft}[teor]{Definition}
\theoremstyle{remark}
\newtheorem{oss}[teor]{Remark}
\DeclareMathOperator\tr{tr}
\DeclareMathOperator\grad{grad}
\DeclareMathOperator\bbH{\mathbb{H}}
\DeclareMathOperator\bbN{\mathbb{N}}
\DeclareMathOperator\bbR{\mathbb{R}}
\DeclareMathOperator\calB{\mathcal{B}}
\DeclareMathOperator\calM{\mathcal{M}}
\DeclareMathOperator\calN{\mathcal{N}}
\DeclareMathOperator\calX{\mathcal{X}}
\DeclareMathOperator\po{\textup{PO}}
\DeclareMathOperator\jac{\textup{Jac}}
\DeclareMathOperator\vol{\textup{Vol}}
\DeclareMathOperator\barb{\textup{bar}_{\mathcal{B}}}
\title[Equivariant maps for measurable cocycles]{Equivariant maps for measurable cocycles with values into higher rank Lie groups}
\author[]{A. Savini}
\address{Section de Math\'ematiques, University of Geneva, Rue du Conseil-G\'en\'eral 7-9, 1205 Geneva, Switzerland}
\email{Alessio.Savini@unige.ch}
\thanks{}
\keywords{uniform lattice, Zimmer cocycle, Patterson-Sullivan measure, natural map, Jacobian, mapping degree}
\date{\today.\ \copyright{\ The author was partially supported by the FNS grant no. 200020-192216.}}
\begin{document}

\begin{abstract}
Let $G$ be a semisimple Lie group of non-compact type and let $\calX_G$ be the Riemannian symmetric space associated to it.
Suppose $\calX_G$ has dimension $n$ and it does not contain any factor isometric to either $\bbH^2$ or $\textup{SL}(3,\bbR)/\textup{SO}(3)$. Given a closed $n$-dimensional complete Riemannian manifold $N$, let $\Gamma=\pi_1(N)$ be its fundamental group and $Y$ its universal cover. Consider a representation $\rho:\Gamma \rightarrow G$ with a measurable $\rho$-equivariant map $\psi:Y \rightarrow \calX_G$. Connell-Farb described a way to construct a map $F:Y \rightarrow \calX_G$ which is smooth, $\rho$-equivariant and with uniformly bounded Jacobian.

In this paper we extend the construction of Connell-Farb to the context of measurable cocycles. More precisely, if $(\Omega,\mu_\Omega)$ is a standard Borel probability $\Gamma$-space, let $\sigma:\Gamma \times \Omega \rightarrow G$ be measurable cocycle. We construct a measurable map $F:Y \times \Omega \rightarrow \calX_G$ which is $\sigma$-equivariant, whose slices are smooth and they have uniformly bounded Jacobian. For such equivariant maps we define also the notion of volume and we prove a sort of mapping degree theorem in this particular context. 
\end{abstract}

\maketitle

\section{Introduction}

The barycenter construction appeared for the first time in the paper of Douady and Earle \cite{douady:earle} who wanted to extend self-maps of the circle to the whole Poincar\'e disk. So far this technique has been widely developed and it has been fruitfully used to obtain several strong rigidity statements in geometric topology. For instance Besson-Courtois-Gallot \cite{bcg95,bcg96,bcg98} used the barycenter method to prove the minimal entropy conjecture in the case of rank-one locally simmetric manifolds. More precisely, given a continuous map $f:N \rightarrow M$ between compact rank-one manifolds, the authors construct the so-called \emph{natural maps} by applying the barycenter to a family of measures that are equivariant with respect to the induced morphism $\pi_1(f)$. Natural maps are smooth, equivariant maps whose Jacobian is uniformly bounded by $1$ and the equality at a point is attained if and only if the differential on the tangent space is a homothety. Similar applications of natural maps in the study of real hyperbolic manifolds were given for instance by Boland-Connell-Souto \cite{boland05} and by Francaviglia and Klaff \cite{franc06:articolo,franc09}. The latter constructed the natural map associated to a representation $\rho:\Gamma \rightarrow \po(m,1)$, where $\Gamma \leq \po(n,1)$ is a torsion-free lattice and $m \geq n \geq 3$. The existence of such a maps allowed to define the notion of volume of representations and to show that this numerical invariant is rigid. Indeed, it holds $\vol(\rho) \leq \vol(\Gamma \backslash \bbH^n_{\bbR})$ for every representation $\rho:\Gamma \rightarrow \po(m,1)$ and the equality is attained if and only if the representation $\rho$ is discrete and faithful. Successively, the author extended the same notion to the context of complex and quaternionic lattices getting a stronger rigidity phenomenon. Indeed, as shown by the author and Francaviglia, the volume function is actually rigid also at the ideal points of the character variety \cite{savini:articolo,savini2:articolo}, leading to a proof of Guilloux's conjecture \cite[Conjecture1]{guilloux:articolo} for $n=2$.

The attempt to extend the proof of the minimal entropy conjecture to semisimple Lie groups of higher rank led Connell-Farb \cite{connellfarb2,connellfarb1,connellfarb4} to define natural maps also in this different context. This strategy allowed the authors to prove the conjecture for manifolds which are quotients of products of rank-one symmetric spaces. Similary they extended the mapping degree theorem for continuous maps between higher rank manifolds. Under the higher rank assumption, it is worth mentioning also the volume rigidity for representations of lattices obtained by Kim and Kim \cite{kim:kim:14} via continuous bounded cohomology.

Among the other possible applications of the barycenter construction and natural maps, it is worth mentioning the rigidity result obtained by Boland-Connell \cite{boland:connell} for foliations of Riemannian manifolds with negatively curved leaves and the rigidity phenomena proved by Boland-Newberger \cite{boland:newberger} and by Adeboye-Bray-Constantine \cite{bray} and the author \cite{savini_conv_proj} for Finsler/Benoist manifolds. To conclude this historical introduction, we recall also the work of Lafont-Schmidt \cite{Lafont-Schmidt}. Using the barycenter construction they showed the positivity of simplicial volume of locally symmetric manifolds of higher rank and the surjectivity of the comparison map in bounded cohomology for a specific range of indices. 

As already done by the author for measurable cocycles of rank-one lattices \cite{savini4:articolo,savini:tautness}, in this paper we would like to apply the barycenter to build natural maps for measurable cocycles taking values into higher rank Lie groups. Let $G$ be a semisimple Lie group of non-compact type with rank bigger than or equal to $2$ and let $\calX_G$ be the Riemannian symmetric space associated to it. Suppose $\calX_G$ has no factor isometric to either $\bbH^2$ or $\textup{SL}(3,\bbR)/\textup{SO}(3)$. If we denote by
$n=\textup{dim}(\calX_G)$ the dimension, we are going to show the following 

\begin{teor}\label{teor:natural:map}
Let $N$ be a closed $n$-dimensional complete Riemannian manifold with fundamental group $\Gamma=\pi_1(N)$ and universal cover $Y$. Let $(\Omega,\mu_\Omega)$ be a standard Borel probability $\Gamma$-space. Let $\sigma:\Gamma \times \Omega \rightarrow G$ be a measurable cocycle. Then there exists a measurable map $F:Y \times \Omega \rightarrow \calX_G$ which is $\sigma$-equivariant, whose slice $F_x:Y \rightarrow \calX_G$ is differentiable for almost every $x \in \Omega$ and there exists a constant $C>0$ such that
$$
\jac_a F_x < C \ ,
$$
for every $a \in Y$ and almost every $x \in \Omega$. Here $C$ is a constant depending only on the dimension $n$ and on the geometry of both $Y$ and $\calX_G$. 
\end{teor}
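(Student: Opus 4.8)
The plan is to adapt the Connell--Farb barycentre construction fibrewise over $\Omega$, taking care that all the choices made pointwise in $x\in\Omega$ can be made measurably. The starting point is to transport the $\sigma$-equivariance problem into a problem about a family of measures. For each $x\in\Omega$ and each $a\in Y$ one wants a finite measure $\mu_{a,x}$ on $\calX_G$ (or on a suitable boundary) which depends on $a$ in an equivariant-up-to-$\sigma$ way: pushing $\mu_{a,x}$ forward by an isometry of $\calX_G$ coming from $\sigma(\gamma,x)$ should give $\mu_{\gamma a,\gamma x}$. Concretely, I would fix a reference point and use the Patterson--Sullivan type densities on the Furstenberg boundary together with the measurable equivariant map that the hypotheses of Theorem~\ref{teor:natural:map} do \emph{not} assume a priori --- so the first real task is to produce such a boundary map. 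Since $(\Omega,\mu_\Omega)$ is only assumed to be a standard Borel probability $\Gamma$-space, one uses amenability of the action of $\Gamma$ on a suitable boundary $B$ together with a measurable cocycle reduction (in the spirit of the cocycle versions of Furstenberg's lemma used in the author's earlier work \cite{savini4:articolo,savini:tautness}) to obtain a measurable $\sigma$-equivariant map $\phi:B\times\Omega\to\calX_G^{\,\mathrm{Fu}}$ into the Furstenberg boundary of $\calX_G$.

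With $\phi$ in hand, for almost every $x$ one pushes forward the $\Gamma$-equivariant family of Patterson--Sullivan measures $\{\nu_a\}_{a\in Y}$ on $\partial Y$ by $\phi(\cdot,x)$ to obtain a family $\{\phi(\cdot,x)_*\nu_a\}_{a\in Y}$ of measures on $\calX_G^{\,\mathrm{Fu}}$, and then applies the (higher rank) barycentre map of Connell--Farb to this family. This is exactly the point where the dimension and curvature hypotheses enter: the exclusion of the $\bbH^2$ and $\SL(3,\bbR)/\SO(3)$ factors is what guarantees that the Hessian of the relevant Busemann-type integral is positive definite, so the barycentre is well defined and depends smoothly on $a$; one then sets $F(a,x):=\barb\bigl(\phi(\cdot,x)_*\nu_a\bigr)$. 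Equivariance of $F$ with respect to $\sigma$ is inherited from that of $\phi$ and the $\Gamma$-equivariance of $\{\nu_a\}$ together with the naturality of the barycentre under isometries: $F(\gamma a,\gamma x)=\sigma(\gamma,x)F(a,x)$ for a.e.\ $x$ and all $\gamma\in\Gamma$, $a\in Y$. Smoothness of the slice $F_x=F(\cdot,x)$ for a.e.\ $x$ follows from the implicit function theorem applied to the defining equation of the barycentre, exactly as in the classical case, since for fixed $x$ the data $a\mapsto \phi(\cdot,x)_*\nu_a$ is as regular in $a$ as the Patterson--Sullivan density.

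The Jacobian bound is obtained fibrewise: for almost every fixed $x$, the slice $F_x$ is literally a Connell--Farb natural map associated to the $\Gamma$-equivariant boundary family $\phi(\cdot,x)\circ(\text{Patterson--Sullivan})$, so the Connell--Farb estimate applies verbatim and gives $\jac_a F_x < C$ with $C$ depending only on $n$, on the lower volume-entropy/curvature data of $Y$, and on the geometry of $\calX_G$ --- crucially \emph{not} on $x$, because these quantities are intrinsic to $Y$ and $\calX_G$ and the only $x$-dependence sits in the boundary map, which does not affect the universal constant in the Jacobian comparison. Finally one checks that $(a,x)\mapsto F(a,x)$, and likewise $(a,x)\mapsto \jac_a F_x$, are jointly measurable; this is the routine but slightly delicate bookkeeping step, handled by noting that $\phi$ is measurable, that the barycentre is a measurable function of the measure (it is the unique zero of a continuous, strictly monotone vector field depending measurably on parameters), and that differentiation in the $a$-direction of a measurably-parametrised smooth family is again measurable. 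The main obstacle, and the place where the bulk of the work goes, is the first step: constructing the measurable $\sigma$-equivariant boundary map $\phi$ under the weak hypothesis that $\Omega$ is merely a standard Borel probability $\Gamma$-space, and verifying that the Patterson--Sullivan pushforwards $\phi(\cdot,x)_*\nu_a$ are non-atomic enough (have full support on the right orbit, are not supported on a proper boundary face) for the higher rank barycentre of Connell--Farb to be defined for a.e.\ $x$; once that is secured, the rest is a fibrewise application of the already-established rank-$\geq 2$ construction plus a measurable-selection argument.
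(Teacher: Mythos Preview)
Your overall architecture---build a $\sigma$-equivariant family of measures on the Furstenberg boundary, take barycentres, and read off the Jacobian bound slice-by-slice from Connell--Farb---matches the paper. But the way you propose to \emph{build} that family is genuinely different from the paper's route, and in two places it does not work under the stated hypotheses.

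First, the ``main obstacle'' you identify, namely producing a measurable $\sigma$-equivariant boundary map $\phi:B\times\Omega\to\calX_G^{\mathrm{Fu}}$ via amenability, is not needed and is not what the paper does. The paper explicitly avoids boundary maps: since $\Gamma$ acts freely and properly discontinuously on $Y$, there is a measurable fundamental domain $\Delta_\Gamma\subset Y$, and one obtains a $\sigma$-equivariant measurable map $\psi:Y\times\Omega\to\calX_G$ into the \emph{interior} symmetric space by choosing any measurable $q:\Delta_\Gamma\times\Omega\to\calX_G$ and extending by the cocycle (Lemma~\ref{lemma:measurable:map}). This is a one-line construction with no amenability or ergodic-theoretic input; the paper even remarks that it is precisely the absence of a fundamental domain on boundaries that makes boundary maps hard, which is why they are bypassed here. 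Your proposed amenability argument would require additional hypotheses (e.g.\ that $\Gamma$ sits in a group with a suitable amenable boundary action) that the theorem does not assume.

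Second, you push forward ``the $\Gamma$-equivariant family of Patterson--Sullivan measures $\{\nu_a\}_{a\in Y}$ on $\partial Y$''. But $Y$ is only the universal cover of an arbitrary closed Riemannian manifold: there is no boundary $\partial Y$ with a Patterson--Sullivan density in general. The paper instead uses, on the \emph{source}, the elementary exponential family $\mu^s_a$ on $Y$ itself (Equation~(\ref{eq:equivariant:family})), pushes it forward to $\calX_G$ via the slice $\psi_x$, and then convolves with the Patterson--Sullivan density $\{\nu_b\}_{b\in\calX_G}$ of the \emph{target}. That convolution is the mechanism that lands the measure on $B(G)$ with full support, so your worry about ``non-atomic enough / not supported on a proper face'' is resolved automatically, not by a separate argument. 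Once you replace your boundary-map step with the fundamental-domain construction of $\psi$ and your $\partial Y$-PS step with the $\mu^s_a$-then-convolve construction, the rest of your outline (implicit function theorem for smoothness of slices, Connell--Farb estimate for the Jacobian with $x$-independent constant, joint measurability) is correct and matches the paper.
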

 

Theorem \ref{teor:natural:map} should be interpreted as a generalization of Connell-Farb theorem to the wider context of measurable cocycle theory. The proof will be based crucially on the existence of a measurable equivariant map proved in Lemma \ref{lemma:measurable:map}. Indeed we are going to consider the pushforward of a suitable equivariant family of measures on $Y$ and then we are going to take the convolution with the Patterson-Sullivan density associated to $G$ (see \cite{patterson76,sullivan79,albuquerque97,albuquerque99}). Since this convolution is fully supported on the \emph{Furstenberg-Poisson boundary} $B(G)$ of $G$, we can apply correctly the barycenter to get our desired map. The computation on slices are exactly the one made by Connell-Farb \cite{connellfarb2,connellfarb1,connellfarb4}. Notice that when $\Gamma$ is a higher rank lattice, the existence of such a map can be argued by Zimmer Superrigidity Theorem \cite{zimmer:annals}, since the cocycle may be trivialized.  

Given a measurable cocycle $\sigma:\Gamma \times \Omega \rightarrow G$, let now assume to have a measurable map $\Phi:Y \times \Omega \rightarrow \calX_G$ which is $\sigma$-equivariant and whose slices $\Phi_x:Y \rightarrow \calX_G, \hspace{5pt} \Phi_x(a):=\Phi(a,x)$ are smooth for almost every $x \in \Omega$. If the Jacobian of the slices is uniformly bounded (that is $\Phi$ has \emph{essentially bounded slices}), then we define the notion of \emph{volume $\vol(\Phi)$ of the measurable map $\Phi$}. Notice that an example of such a map is exactly the natural map we constructed. Additionally, in the particular case of Zariski dense cocycles of higher rank lattices, the volume boils down to the covolume of the lattice itself. 

Since, given a continuous function between compact manifolds allows to pullback measurable cocycles, we state a result which should be interpreted as a mapping degree theorem for measurable equivariant maps. More precisely we have

\begin{prop}\label{prop:degree:map}
Let $N,M$ be a closed $n$-dimensional Riemannian manifolds with fundamental groups $\Gamma=\pi_1(N), \Lambda=\pi_1(M)$ and universal covers $Y,X$, respectively. Suppose that there exists a smooth function $f:N \rightarrow M$ with non-vanishing degree and uniformly bounded Jacobian. Let $(\Omega,\mu_\Omega)$ be a standard Borel probability $\Lambda$-space and let $\sigma:\Lambda \times \Omega \rightarrow G$ be a measurable cocycle. Given a measurable $\sigma$-equivariant map $\Phi:Y \times \Omega \rightarrow \calX_G$ with smooth essentially bounded slices, it holds that
$$
|\deg(f)| \leq \frac{\vol(f^\ast \Phi)}{\vol(\Phi)} \ .
$$
\end{prop}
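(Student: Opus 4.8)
The plan is to reduce Proposition~\ref{prop:degree:map} to the area (change-of-variables) formula for the smooth map $f$, once the fibrewise Jacobian of $\Phi$, averaged over $\Omega$, has been seen to descend to an $L^1$ density on the closed manifold $M$, and similarly for $f^\ast\Phi$ on $N$.

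First I would make the pullback explicit. Let $f_\ast=\pi_1(f):\Gamma\to\Lambda$ be the induced morphism and let $\tilde f:Y\to X$ be a lift of $f$, so that $\tilde f(\gamma a)=f_\ast(\gamma)\tilde f(a)$ for all $\gamma\in\Gamma$ and $a\in Y$. Viewing $(\Omega,\mu_\Omega)$ as a standard Borel probability $\Gamma$-space through $f_\ast$ (the action stays measurable and the measure invariant), the formula $(f^\ast\sigma)(\gamma,x):=\sigma(f_\ast(\gamma),x)$ defines a measurable cocycle $f^\ast\sigma:\Gamma\times\Omega\to G$, and $(f^\ast\Phi)(a,x):=\Phi(\tilde f(a),x)$ is $f^\ast\sigma$-equivariant with slices $(f^\ast\Phi)_x=\Phi_x\circ\tilde f$. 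These are smooth for almost every $x$, with $\jac_a(f^\ast\Phi)_x=\jac_{\tilde f(a)}(\Phi_x)\cdot\jac_a\tilde f$; since $f$ has uniformly bounded Jacobian and $\Phi$ has essentially bounded slices, $f^\ast\Phi$ has smooth essentially bounded slices and $\vol(f^\ast\Phi)$ is defined, so the statement is meaningful.

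Next I would produce the descended densities. Set $h_\Phi(b):=\int_\Omega\jac_b(\Phi_x)\,d\mu_\Omega(x)$ for $b\in X$. The $\sigma$-equivariance of $\Phi$ relates the slices $\Phi_x$ and $\Phi_{\gamma x}$ by pre- and post-composition with isometries (an element of $\Lambda$ acting on $X$ and the cocycle value, an element of $G$, acting on $\calX_G$), each of unit Jacobian; integrating over $\Omega$ and using the $\Lambda$-invariance of $\mu_\Omega$ then shows that $h_\Phi$ is $\Lambda$-invariant. Hence $h_\Phi$ descends to a nonnegative measurable function $\overline{h_\Phi}$ on $M$, and by the definition of the volume together with Tonelli's theorem one has $\vol(\Phi)=\int_M\overline{h_\Phi}\,d\vol_M$. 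Running the same computation for $f^\ast\Phi$, and using that $a\mapsto\jac_a\tilde f$ is $\Gamma$-invariant (which follows from $\tilde f\circ\gamma=f_\ast(\gamma)\circ\tilde f$ together with deck transformations being isometries), one obtains $h_{f^\ast\Phi}(a)=\jac_a(\tilde f)\cdot h_\Phi(\tilde f(a))$, which descends on $N$ to $\overline{h_{f^\ast\Phi}}=\jac(f)\cdot(\overline{h_\Phi}\circ f)$, where $\jac(f):N\to\bbR_{\geq 0}$ is the Jacobian of $f$. Therefore
$$
\vol(f^\ast\Phi)=\int_N\jac_p(f)\,\overline{h_\Phi}(f(p))\,d\vol_N(p).
$$

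Finally I would invoke the area formula for the smooth (in particular Lipschitz) map $f:N\to M$ between closed $n$-manifolds, applied to the nonnegative integrand $\overline{h_\Phi}\circ f$, to rewrite the last integral as $\int_M\overline{h_\Phi}(q)\,\#f^{-1}(q)\,d\vol_M(q)$. By Sard's theorem almost every $q\in M$ is a regular value of $f$, and for such $q$ the fibre $f^{-1}(q)$ is finite with $\deg(f)=\sum_{p\in f^{-1}(q)}\sign\det(df_p)$, so $\#f^{-1}(q)\geq|\deg(f)|$; since $\overline{h_\Phi}\geq 0$ this yields $\vol(f^\ast\Phi)\geq|\deg(f)|\,\vol(\Phi)$, which is the claimed inequality after dividing by $\vol(\Phi)$ (if $\vol(\Phi)=0$ the same identities force $\vol(f^\ast\Phi)=0$ and there is nothing to prove). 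The main obstacle is not a single estimate but two points of care: in the middle step, combining Fubini with the cocycle identity and the isometric actions so that the averaged Jacobians are genuinely invariant under the deck groups and therefore define $L^1$ densities downstairs; and in the last step, replacing the classical smooth identity $\int_N f^\ast\omega=\deg(f)\int_M\omega$ by its measure-theoretic counterpart, the area formula, which is exactly what converts signs into multiplicities and forces an inequality in place of an equality.
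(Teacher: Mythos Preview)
Your argument is correct and follows essentially the same route as the paper: factor the slice Jacobian of $f^\ast\Phi$ via the chain rule, apply the area/coarea formula to convert the integral over $N$ into one over $M$ weighted by the fibre cardinality, and then use $\#f^{-1}(q)\geq|\deg(f)|$ for regular values. Your presentation is in fact somewhat more careful than the paper's, making explicit the invariance of the averaged Jacobian $h_\Phi$ and its descent to $M$, and addressing the degenerate case $\vol(\Phi)=0$.
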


\subsection*{Plan of the paper} 

In Section \ref{sec:preliminary:def} we recall basic definitions and results that we need for our exposition. We start with Section \ref{sec:zimmer:cocycle} where we remind the notion of measurable cocycle, cohomology class and equivariant map. Then we move to Section \ref{sec:patterson:sullivan} where we describe the Patterson-Sullivan density associated to a higher rank Lie group. Section \ref{sec:bar:natural} is devoted to the description of the barycenter method and to the definition of Connell-Farb natural map. In Section \ref{sec:natural:map} we prove Theorem \ref{teor:natural:map} and we compare our definition of natural map with the one of Connell-Farb (Proposition \ref{prop:natural:rep}). We then show in Proposition \ref{prop:natural:cohomology} how natural maps vary in a specific cohomology class. The definition of volume of a equivariant map is given in Section \ref{sec:volume}, where we prove also Proposition \ref{prop:degree:map}. 

\subsection*{Acknowlegdements} I am grateful to Marco Moraschini for the useful discussion about this topic. 

I would also like to thank the anonymous referee for her/his suggestions that allowed me to improve the quality of the paper. 

\section{Preliminary definitions and results}\label{sec:preliminary:def}
 
In this section we are going to recall all the definitions and the results we are going to need throughout the paper. We will first give a brief introduction about the notion of measurable cocycle. Then we will introduce a key tool in order to construct our natural maps: the Patterson-Sullivan family of measures associated to a higher rank semisimple Lie group. This family will generalize the standard construction made by both Patterson \cite{patterson76} and Sullivan \cite{sullivan79} in case of rank-one Lie groups of non-compact type. Finally we are going to recall the barycenter construction of a probability measure supported on the Furstenberg-Poisson boundary.

\subsection{Measurable cocycles}\label{sec:zimmer:cocycle}

In this section we are going to recall the main definition of measurable cocycles. The following will be a short introduction and we refer to both Furstenberg \cite{furst:articolo73,furst:articolo} and to Zimmer \cite{zimmer:preprint,zimmer:libro} for a more detailed description. 

Let $G,H$ be two locally compact second countable groups and endow both with their Haar $\sigma$-algebras and measurable structures. Fix a standard Borel probability space $(\Omega,\mu)$ where $\mu$ has no atoms. If $G$ acts on $\Omega$ by measure preserving transformations, we are going to call $(\Omega,\mu)$ a \emph{standard Borel probability $G$-space}. If $(\Theta,\nu)$ is another measure space, we denote by $\textup{Meas}(\Omega,\Theta)$ the space of measurable maps endowed with the topology of convergence in measure. 

With the notation above, we are now ready to give the following 

\begin{deft}
A measurable map $\sigma:G \times \Omega \rightarrow H$ is a \emph{measurable cocycle} (or \emph{Zimmer's cocycle}) if it holds
\begin{equation}\label{eq:zimmer:cocycle}
\sigma(g_1 g_2,x)=\sigma(g_1,g_2.x) \sigma(g_2,x) \ ,
\end{equation}
for every $g_1,g_2 \in G$ and almost every $x \in \Omega$. Here the notation $g_2.x$ refers to the action of $G$ on the space $\Omega$. 
\end{deft}

It is worth noticing that Equation (\ref{eq:zimmer:cocycle}) can be suitably interpreted as a sort of generalization of the chain rule for derivatives in this context. For the reader who is familiar with group cohomology, we want to underline that by viewing $\sigma \in \textup{Meas}(G,\textup{Meas}(\Omega,H))$, Equation (\ref{eq:zimmer:cocycle}) is equivalent to require that $\sigma$ is a Borel $1$-cocycle in the sense of Eilenber-MacLane (see \cite{feldman:moore,zimmer:preprint}). Using the latter interpretation, we can naturally ask which is the right condition on two cocycles for being cohomologous.

\begin{deft}\label{def:cocycle:cohomology}
Let $\sigma:G \times \Omega \rightarrow H$ be a measurable cocycle. Given a measurable map $f:\Omega \rightarrow H$, the \emph{twisted cocycle with respect to $f$ and $\sigma$} is given by
$$
f.\sigma:G \times \Omega \rightarrow H, \hspace{5pt} (f.\sigma)(g,x):=f(g.x)^{-1}\sigma(g,x)f(x) \ , 
$$
for every $g \in G$ and almost every $x \in \Omega$. Two measurable cocycles $\sigma_1,\sigma_2:G \times \Omega \rightarrow H$ are \emph{cohomologous} (or \emph{equivalent}) if there exists a measurable map $f:\Omega \rightarrow H$ such that 
$$
\sigma_2=f.\sigma_1 \ . 
$$
\end{deft}

The role played by measurable cocycles in mathematics is central. We will particularly be interested in the examples coming from representation theory. 

\begin{deft}\label{def:cocycle:rep}
Let $\rho:G \rightarrow H$ be a continuous representation. Fix any standard Borel probability $G$-space $(\Omega,\mu)$. The \emph{measurable cocycle associated to $\rho$} is defined as
$$
\sigma_\rho:G \times \Omega \rightarrow H, \hspace{5pt} \sigma_\rho(g,x):=\rho(g) \ ,
$$
for every $g \in G$ and almost every $x \in \Omega$. 
\end{deft}

The above definition should suggest how representation theory can be suitably seen inside the wider context of measurable cocycles theory. We want to underline that even if the definition above depends actually also on the choice of Borel space $\Omega$, we prefer to omit this dependence from the notation $\sigma_\rho$. Additionally, it is worth noticing that when $G$ is a discrete group every representation is automatically continuous.  

We conclude this short introduction about measurable cocycles by recalling the notion of equivariant maps and how they change along cohomology classes. 

\begin{deft}\label{def:equivariant:map}
Let $\sigma:G \times \Omega \rightarrow H$ be a measurable cocycle. Assume that $G$ and $H$ acts continuously on two topological spaces $Y$ and $X$, respectively. A measurable map $\psi:Y \times \Omega \rightarrow X$ is \emph{$\sigma$-equivariant} if it holds
$$
\psi(g.a,g.x)=\sigma(g,x)\psi(a,x) \ ,
$$ 
for every $g \in G$ and almost every $a \in Y, x \in \Omega$. 
\end{deft}

Assume that $\sigma:G \times \Omega \rightarrow H$ is a measurable cocycle and let $\psi:Y \times \Omega \rightarrow X$ be a measurable $\sigma$-equivariant map as above. Given a measurable map $f:\Omega \rightarrow H$, we can define the following map 
$$
f.\psi:Y \times \Omega \rightarrow X, \hspace{5pt} (f.\psi)(a,x)=f(x)^{-1}\psi(a,x) \ ,
$$
for almost every $a \in Y,x \in \Omega$. It is easy to verify that the map $f.\psi$ is a measurable $f.\sigma$-equivariant map. 

\subsection{Patterson-Sullivan measures}\label{sec:patterson:sullivan}

In this section we are going to recall the definition of Patterson-Sullivan density associated to a lattice is a semisimple Lie group of non-compact type. For rank-one Lie groups we mainly refer to the pioneering work of Patterson \cite{patterson76} and Sullivan \cite{sullivan79}. In the case of negatively curved spaces we suggest the paper of Burger-Mozes \cite{burger:mozes}. However, since we will be mainly interested in the case of higher rank Lie groups, we refer the reader to Albuquerque \cite{albuquerque97,albuquerque99} for a more detailed description of the argument.

Let $G$ be a semisimple Lie group of non-compact type and let $\calX_G$ the Riemannian symmetric space associated to $G$. We denote by $\partial_\infty \calX_G$ the boundary at infinity of $\calX_G$ endowed with the cone topology. Given a point $a \in \calX_G$, the \emph{Busemann function pointed at $a$} is the map
$$
\beta_a:\calX_G \times \partial_\infty \calX_G \rightarrow \bbR, \hspace{5pt} \beta_a(b,\xi):=\lim_{t \to \infty} d_G(c(t),a)-d_G(c(t),b) \ ,
$$
where $d_G(\cdot,\cdot)$ is the distance associated to the Riemannian structure on $\calX_G$ and $c:[0,\infty) \rightarrow \calX_G$ is the unique geodesic ray starting at $c(0)=a$ and ending at $c(\infty)=\xi$. Busemann functions are convex and this convexity property will be crucial to apply correctly the barycenter construction, as we will see later in Section \ref{sec:bar:natural}.

\begin{deft}
Fix any basepoint $a \in \calX_G$. The \emph{critical exponent $\delta_G$} associated to a semisimple Lie group $G$ of non-compact type is given by
$$
\delta_G:=\inf \{ s \in \bbR | \int_{G} e^{-sd_G(a,g.a)}d\mu_G(g) < \infty \} \ ,
$$
where $\mu_G$ is the Haar measure on $G$. It is worth noticing that the definition we gave does not depend on the particular choice of the basepoint $a \in \calX_G$. 
\end{deft}

Recall that the \emph{volume entropy} of the symmetric space $\calX_G$ is given by 
$$
h(\calX_G):=\lim_{r \to \infty} \frac{\log(\vol(B_r(a)))}{r} \ ,
$$
where $B_r(a)$ is the Riemannian ball pointed at $a$ of radius $r$ and $\vol$ is the standard Riemannian volume on $\calX_G$. By Albuquerque \cite[Theorem 2]{albuquerque97}, \cite[Theorem C]{albuquerque99} we know that the critical exponent of $G$ it is equal to the critical exponent of any of its lattices and it coincides with the volume entropy, that is
$$
\delta_G = h(\calX_G) \ . 
$$

We are now ready to give the definition of Patterson-Sullivan family associated to the group $G$. This will actually be included in the more general definition of conformal density. In order to proceed, given any topological space $X$, we are going to denote by $\calM^1(X)$ the space of positive probability measure on $X$. 

\begin{deft}\label{def:conformal:density}
Let $\alpha>0$ be a positive real number. An $\alpha$-\emph{conformal density} is a measurable map 
$$
\nu:\calX_G \rightarrow \calM^1(\partial_\infty \calX_G), \hspace{5pt} \nu(a):=\nu_a \ ,
$$
such that
\begin{enumerate}
	\item each measure $\nu_a$ has no atoms;
\item given two points $a,b \in \calX_G$, the measures $\nu_a$ and $\nu_b$ are absolutely continuous and it holds
$$
\frac{d\nu_a}{d\nu_b}(\xi)=e^{-\alpha \beta_b(a,\xi)} \ ,
$$
where $\xi \in \partial_\infty \calX_G$ and $\beta_b(a,\xi)$ is the Busemann function pointed at $b \in \calX_G$. 
\end{enumerate}

A \emph{Patterson-Sullivan density} is the $h(\calX_G)$-conformal density.   
\end{deft}

The existence of a Patterson-Sullivan density is proved by Albuquerque, who proves also that such a density is essentially unique up to a multplicative constant \cite[Proposition D]{albuquerque99}.  

The last remarkable property of the Patterson-Sullivan measures refers to their support. More precisely the support of the measure $\nu_a$ coincides with the \emph{Furstenberg-Poisson boundary} $B(G)$ of $G$, as shown in \cite[Theorem]{albuquerque97}, \cite[Theorem C]{albuquerque99}. The latter can be seen as the unique $G$-orbit of a regular point in $\partial_\infty \calX_G$ and it is usually identified with the homogeneous space $G/P$, where $P$ is any minimal parabolic subgroup. Notice that when the rank of $G$ is equal to $1$ every point in $\partial_\infty \calX_G$ is regular and the Furstenberg-Poisson boundary is equal to the boundary at infinity. 

We conclude by underling that the measure $\nu_a$ of the Patterson-Sullivan density associated to $G$ is the unique probability measure on $B(G)$ which is $K_a:=\textup{Stab}_G(a)$-invariant. The previous remark guarantees also the fact that the density $\nu$ is a $G$-equivariant map, that is $\nu_{g.a}=g_\ast \nu_a$, where $g_\ast \nu_a$ is the push-forward measure. 

\subsection{Barycenter and Connell-Farb construction}\label{sec:bar:natural}
The main subject of this section will be the barycenter construction introduced by Douady-Earle \cite{douady:earle}. This construction was exploited by Besson-Courtois-Gallot \cite{bcg95,bcg96,bcg98} to construct natural maps for rank-one Lie groups of non-compact type. The same approach was extended by Connell-Farb \cite{connellfarb2,connellfarb1,connellfarb4} to higher rank Lie groups. 

Let $G$ be a semisimple Lie group of non-compact type and let $\calX_G$ the Riemannian symmetric space associated to $G$. As before, denote by $\partial_\infty \calX_G$ the boundary at infinity of $\calX_G$. Let $\nu$ be a positive probability measure on $\partial_\infty \calX_G$, that is $\nu \in \calM^1(\partial_\infty \calX_G)$. If we fix a basepoint $o \in \calX_G$, using the measure $\nu$ we can define the map 

$$
\calB_\nu: \calX_G \rightarrow \bbR, \hspace{5pt} \calB_\nu(a):=\int_{\partial_\infty \calX_G} \beta_o(a,\xi)d\nu(\xi) \ ,
$$ 
where $\beta_o$ is the Busemann function pointed at $o \in \calX_G$ (see Section \ref{sec:patterson:sullivan}). Even if a priori the Busemann function is not strictly convex, since we are not necessarily considering the rank one case, under suitable hypothesis we can say something abouth the convexity of the function $\calB_\nu$. As shown by Connell-Farb \cite[Proposition 12]{connellfarb3}, when the measure $\nu$ is fully supported on the Furstenberg-Poisson boundary $B(G)$, then the function $\calB_\nu$ is strictly convex. Hence there exists a unique point which attains the minimun. 

\begin{deft}\label{def:barycenter:measure}
Let $\nu \in \calM^1(\partial_\infty \calX_G)$ be a positive probability measure whose support coincides with the Furstenberg-Poisson boundary of $\calX_G$, that is $\textup{supp}(\nu)=B(G)$. Then the \emph{barycenter} of the measure $\nu$ is the point in the symmetric space $\calX_G$ defined as 
$$
\barb(\nu):=\textup{argmin}(\calB_\nu) \ ,
$$
where argmin is the point where $\calB_\nu$ attains its minimum.
\end{deft}

The subscript $\calB$ we used in the definition of the barycenter suggests the dependence of the construction on the function $\calB_\nu$, and hence on Busemann functions.

We report below a brief list of properties of the barycenter.
\begin{itemize}
	\item The barycenter is weak-${}^\ast$ continuous. More precisey given a sequence $(\nu_k)_{k \in \bbN}$ of probability measures such that $\nu_k$ converges to $\nu$ in the weak-${}^\ast$ topology and they are all supported on the boundary $B(G)$, it holds
$$
\lim_{k \to \infty} \barb(\nu_k)=\barb(\nu) \ . 
$$
 	\item The barycenter is $G$-equivariant. Given an element $g \in G$ and a probability measure $\nu$ supported on $B(G)$, we have
$$
\barb(g_\ast \nu)=g\barb(\nu) \ ,
$$
where $g_\ast \nu$ denotes the pushforward measure with respect to $g$. 
	\item The barycenter of a probability measure $\nu$ supported on $B(G)$ satisfies the following implicit equation 
\begin{equation}\label{eq:implicit:barycenter}
\int_{B(G)} d\beta_o|_{(\barb(\nu),\xi)}(\cdot)d\nu(\xi)=0 \ ,
\end{equation}
where $d\beta_o$ denotes the differential of the Busemann function $\beta_o$. 
\end{itemize}

Since we will need to compare it with our version of natural map in the case of measurable cocycle, we will conclude the section by recalling briefly the Connell-Farb approach to natural maps \cite{connellfarb2,connellfarb1,connellfarb4}. Let $N$ be a closed $n$-dimensional complete Riemannian manifold whose fundamental group is $\Gamma=\pi_1(N)$ and with universal cover $Y$. 
Fix first a positive real number $s>h(Y)$, where $h(Y)$ is the volume entropy of $Y$. Denoting by $\mu$ the Riemmian volume measure on $Y$ we can define the following family of measures in terms of the Radon-Nikodym derivative
\begin{equation}\label{eq:equivariant:family}
\frac{d\mu^s_a}{d\mu}:= \frac{e^{-sd_Y(a,z)}}{\int_{Y} e^{-sd_Y(a,z)}d\mu(z)} \ ,
\end{equation}
where $a \in Y$ and $d_Y(\cdot,\cdot)$ stands for the Riemannian distance on $Y$. This is the same family defined for instance in \cite{connellfarb2} and it is clearly equivariant with respect to the natural action of $\Gamma$, that is
$$
\mu^s_{\gamma a}=\gamma_\ast(\mu^s_a) \ ,
$$
where $\gamma \in \Gamma$ and $\gamma_\ast$ is the pushforward measure. 

Let now $\rho:\Gamma \rightarrow G$ be a representation. Consider a measurable map $\psi:Y \rightarrow \calX_G$ which is $\rho$-equivariant. Then one can define the following family of measures
\begin{equation}\label{eq:convolution:measure}
\lambda^s_a:=((\psi)_\ast(\mu^s_a)) \ast \{\nu_b \}_{b \in \calX_G} \ ,
\end{equation}
for every $a \in Y$. The convolution that appears in Equation (\ref{eq:convolution:measure}) is defined as follows
$$
\lambda^s_a(U):=\int_{\calX_G} \nu_b(U)d((\psi)_\ast(\mu^s_a))(b)=\int_{Y}\nu_{\psi(z)}(U)d\mu^s_a(z) \ , 
$$
for every Borel subset $U \subseteq B(G)$. Since we used the Patterson-Sullivan family in the convolution, for every $a \in Y$ the measure $\lambda^s_a$ is supported on the boundary $B(G)$. Thus we can correctly apply the barycenter to get a point $\calX_G$. Indeed Connell-Farb \cite{connellfarb2,connellfarb1} define the map 
$$
F^s:Y \rightarrow \calX_G \  ,
$$
$$
F^s(a):=\barb(\lambda^s_a) = \barb \left( \left( \int_{Y} e^{-sd_Y(a,z)-h(\calX_G)\beta_o(\psi(z),\xi)}d\mu(z) \right) d\nu_o(\xi) \right) \ .
$$
If we now substitute this expression in the implicit Equation (\ref{eq:implicit:barycenter}) we obtain
\begin{equation}
\int_{B(G)} d\beta_o|_{(F^s(a),\xi)} ( \cdot )d\lambda^s_a(\xi)=0 \ ,
\end{equation}
and by differentiating it we get
\begin{align}\label{eq:implicit:differentiated:representation}
&\int_{B(G)} \nabla d\beta_o|_{(F^s(a),\xi)}(D_aF^s(u),v)d\lambda^s_a(\xi)=\\
s\int_{Y} &\int_{B(G)} d\beta_o|_{(F^s(a),\xi)}(v) \cdot \langle \grad_a d_Y(a,z), u \rangle d\nu_{\psi(z)}(\xi)d\mu^s_a(z) \nonumber \ ,
\end{align}
for every $a \in Y$, $u \in T_aY$ and $v \in T_{F^s(a)}\calX_G$. Here $\nabla$ is the Levi-Civita connection associated to the standard Riemannian structure on $\calX_G$ and $\grad_a$ is the Riemannian gradient on $Y$. In the next section we will see that the equation above will hold at every slice of our natural map associated to a fixed measurable cocycle. 

\section{Natural maps associated to measurable cocycles of higher rank lattices}\label{sec:natural:map}

Let $G$ be a semisimple Lie group of non-compact type with rank bigger than or equal to $2$ and denote by $\calX_G$ the associated symmetric space. If $\textup{dim}(\calX_G)=n$, let $N$ be a closed $n$-dimensional complete Riemannian manifold with fundamental group $\Gamma=\pi_1(N)$ and universal cover $Y$. In this section we are going to construct explicitly natural maps associated to Zimmer's cocycles valued into $G$. The main strategy to construct these maps will be to consider a suitable equivariant family of measures on $Y$, consider their pushforward with respect a measurable $\sigma$-equivariant map and then apply the convolution with the Patterson-Sullivan family introduced in Section \ref{sec:patterson:sullivan}. The existence of natural maps, that is $\sigma$-equivariant maps with differentiable slices and uniformly bounded Jacobian, will generalize the construction already developed by the author \cite{savini4:articolo} for torsion-free lattices in rank-one Lie groups. However it is worth noticing that the approach we are going to develop here is quite different with respect to the one of \cite{savini4:articolo}. Indeed here we are going to use measurable equivariant map defined on $Y$ and on the symmetric space $\calX_G$ rather then boundary maps.

Before proving the existence of a measurable equivariant map, we need first to recall the definition of measurable fundamental domain with respect to the action of $\Gamma$ on $Y$. A measurable subset $\Delta_\Gamma \subset Y$ is a \emph{measurable fundamental domain} if it holds 
$$\mu(\Delta_\Gamma \cap \gamma \Delta_\Gamma)=0 \ ,$$ 
for every non trivial element $\gamma \in \Gamma$, and
$$\mu(Y \setminus \bigcup_{\gamma \in \Gamma} \gamma \Delta_\Gamma)=0 \ .$$
Recall that $\mu$ is the measure induced by the Riemannian structure on $Y$. In literature one can impose more restrictive conditions to define a measurable fundamental domain (for instance one may require that the above equations hold everywhere and not only almost everywhere). Nevertheless for our purposes it is sufficient to deal with the definition we gave, since we will care only about functions defined almost everywhere (for instance the equivariance must hold only on a full measure subset and, similarly, the construction of the natural map is not affected if we change along a measure zero subset the starting equivariant function).  

\begin{lem}\label{lemma:measurable:map}
Let $\Gamma=\pi_1(N)$ be the fundamental group of a closed $n$-dimensional complete Riemannian manifold $N$ and let $Y$ be its universal cover. Fix a standard Borel probability $\Gamma$-space. Given a measurable cocycle $\sigma:\Gamma \times \Omega \rightarrow G$, there exists a measurable $\sigma$-equivariant map 
$$
\psi:Y \times \Omega \rightarrow \calX_G. 
$$
\end{lem}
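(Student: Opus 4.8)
The plan is to construct $\psi$ by first building a measurable $\Gamma$-equivariant map from $\Omega$ to a suitable space of maps $Y \to \calX_G$, and then to uncurry. The key point is that $\calX_G$ is a Hadamard manifold (nonpositively curved, simply connected), hence contractible, so there is no topological obstruction to equivariance once a measurable section is available. Concretely, fix a measurable fundamental domain $\Delta_\Gamma \subset Y$ for the $\Gamma$-action as recalled before the statement. Since $N$ is closed, $\Gamma$ acts properly discontinuously and cocompactly on $Y$, so $\Delta_\Gamma$ can be taken to be relatively compact; in particular $\mu(\Delta_\Gamma) < \infty$ and almost every point of $Y$ has a unique representative in $\Delta_\Gamma$.

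First I would fix any basepoint $o_G \in \calX_G$ and define a preliminary measurable map $\psi_0 : Y \times \Omega \to \calX_G$ by setting $\psi_0(a,x) := o_G$ for $a \in \Delta_\Gamma$, and then extending it to all of $Y$ by the equivariance rule: for almost every $a \in Y$ there is a unique $\gamma \in \Gamma$ with $\gamma^{-1} a \in \Delta_\Gamma$, and we set
$$
\psi_0(a,x) := \sigma(\gamma, x)\, o_G \, .
$$
This is measurable because the assignment $a \mapsto \gamma$ is measurable (it is locally constant off a measure-zero set, using that $\Delta_\Gamma$ is a fundamental domain and the $\Gamma$-action is properly discontinuous) and $\sigma$ is measurable in both variables. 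The cocycle identity (\ref{eq:zimmer:cocycle}) is exactly what makes $\psi_0$ satisfy the $\sigma$-equivariance of Definition \ref{def:equivariant:map}: for $g \in \Gamma$ and $a$ with $\gamma^{-1}a \in \Delta_\Gamma$, the point $g.a$ has representative $\gamma' = g\gamma$, so $\psi_0(g.a, g.x) = \sigma(g\gamma, g.x)\,o_G = \sigma(g, \gamma.(g.x))\sigma(\gamma, g.x)\,o_G$; one must check the indices match the $\Omega$-action, which works out because $\gamma.(g.x)$ is where $a$ sits after translating $x$ — this is the routine bookkeeping step and I would carry it out carefully but not dwell on it. Setting $\psi := \psi_0$ already gives a measurable $\sigma$-equivariant map, which is all the Lemma asks for (no regularity on slices is claimed here — that is the content of Theorem \ref{teor:natural:map}, obtained later by the convolution-and-barycenter procedure).

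The only genuine subtlety, and the step I expect to be the main obstacle, is the joint measurability of $\psi_0$ as a function on $Y \times \Omega$: one needs that $(a,x) \mapsto \sigma(\gamma(a), x)\,o_G$ is measurable for the product $\sigma$-algebra, where $\gamma(a)$ is the (measurable, $\Omega$-independent) "return element". This follows by writing $Y$, up to a null set, as the countable disjoint union $\bigsqcup_{\gamma \in \Gamma} \gamma \Delta_\Gamma$, on each piece of which $\gamma(\cdot)$ is constant equal to $\gamma$, so that $\psi_0|_{\gamma\Delta_\Gamma \times \Omega}(a,x) = \sigma(\gamma,x)\,o_G$ depends measurably on $x$ alone; measurability on the union follows since a countable union of measurable pieces is measurable. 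I would also remark that one could instead invoke the existence of a measurable section of $Y \to N$ together with the standard fact that $\mathrm{Meas}(\Omega, \calX_G)$ carries a Polish structure and $\Gamma$ acts on it through $\sigma$, and then any measurable $\Gamma$-equivariant map $\Omega \to \mathrm{Meas}(Y, \calX_G)$ uncurries to the desired $\psi$; but the explicit fundamental-domain construction above is the most transparent route and avoids any appeal to selection theorems beyond the existence of $\Delta_\Gamma$ itself.
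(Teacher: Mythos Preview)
Your approach is the same as the paper's: choose a measurable fundamental domain $\Delta_\Gamma$, define the map arbitrarily on $\Delta_\Gamma\times\Omega$ (the paper allows any measurable $q$; you take the constant $o_G$), and extend by the cocycle. Your measurability argument via the countable decomposition $Y=\bigsqcup_\gamma \gamma\Delta_\Gamma$ is in fact more explicit than the paper's one-line appeal to the measurability of $\sigma$ and $q$.

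However, the step you label ``routine bookkeeping'' hides a genuine error in your extension formula. You set $\psi_0(a,x)=\sigma(\gamma,x)\,o_G$ for $a=\gamma a_0$ with $a_0\in\Delta_\Gamma$. Your own check then yields
\[
\psi_0(g.a,g.x)=\sigma(g\gamma,g.x)\,o_G=\sigma\bigl(g,\gamma.(g.x)\bigr)\,\sigma(\gamma,g.x)\,o_G,
\]
whereas $\sigma$-equivariance demands $\sigma(g,x)\,\psi_0(a,x)=\sigma(g,x)\,\sigma(\gamma,x)\,o_G$. These two expressions are \emph{not} equal in general; the cocycle identity does not give $\sigma(g,\gamma g.x)\sigma(\gamma,g.x)=\sigma(g,x)\sigma(\gamma,x)$, and the sentence ``$\gamma.(g.x)$ is where $a$ sits after translating $x$'' does not parse into a valid argument.

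The fix is to translate the $\Omega$-variable along with the $Y$-variable. Define instead
\[
\psi_0(a,x):=\sigma(\gamma,\gamma^{-1}.x)\,o_G \qquad \text{when } a=\gamma a_0,\ a_0\in\Delta_\Gamma .
\]
Then $\psi_0(g.a,g.x)=\sigma\bigl(g\gamma,(g\gamma)^{-1}.(g.x)\bigr)\,o_G=\sigma(g\gamma,\gamma^{-1}.x)\,o_G$, and applying the cocycle identity at $\gamma^{-1}.x$ gives $\sigma(g\gamma,\gamma^{-1}.x)=\sigma(g,x)\,\sigma(\gamma,\gamma^{-1}.x)$, which is exactly what is needed. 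This is precisely the paper's formula $\psi(a,x)=\sigma(\gamma,x_0)\,q(a_0,x_0)$ for $(a,x)=\gamma.(a_0,x_0)$, specialised to $q\equiv o_G$.
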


\begin{proof}
Recall that $\Gamma$ acts freely, properly discontinuously and by isometries on $Y$ (it is worth noticing that $\Gamma$ is actually lattice in the isometry group $\textup{Isom}(Y)$, where the latter is endowed with the compact-open topology, see for instance \cite[Lemma 4.2]{Loh-Sauer} for a detailed proof). For such an action a measurable fundamental domain exists (an explicit example of measurable fundamental domain is the one given by the Dirichlet condition, that is 
$$
\Delta_\Gamma:=\{a \in Y \ | \ d_Y(a,o)<d_Y(a,\gamma.o), \ \textup{for every $\gamma \in \Gamma \setminus \{e_\Gamma\}$} \} \ ,
$$
where $o$ is a fixed based point in $Y$).

Given a measurable fundamental domain $\Delta_\Gamma$, consider a measurable function $q:\Delta_\Gamma \times \Omega \rightarrow \calX_G$. We can get a measurable map $\psi:Y \times \Omega \rightarrow \calX_G$ as follows
$$
\psi(a,x):=
\begin{cases*}
q(a,x),& \ \ \text{if $a \in \Delta_\Gamma$}\ ,\\
\sigma(\gamma,x_0)q(a_0,x_0), & \ \ \text{if $(a,x)=\gamma.(a_0,x_0)$}.\\
\end{cases*}
$$
The function $\psi$ is well-defined since $\Delta_\Gamma$ is a measurable fundamental domain and $\Gamma$ acts on $\Omega$ by measure preserving transformations. It is worth noticing that $\psi$ could actually be an almost everywhere defined function (for instance if we consider the Dirichlet condition). In that case one may extend it to a measurable function by defining the extension to be constant on the missing subset of null measure. 

By construction $\psi$ is equivariant in the sense of Definition \ref{def:equivariant:map}. Additionally the measurability of $\psi$ follows by the measurability of both $\sigma$ and $q$, and the statement is proved.
\end{proof}

\begin{oss}
The crucial aspect in the previous proof is the existence of a measurable fundamental domain of the $\Gamma$-action on the universal cover $Y$. On the contrary, in the case of boundaries the $\Gamma$-action is not \emph{smooth} in the sense of Zimmer \cite[Definition 2.1.9]{zimmer:libro}), thus it cannot admit a measurable fundamental domain. This is one of the reasons for which proving the existence of boundary maps reveals much more difficult. 
\end{oss}

Now we are going to use the measurable equivariant map $\psi:Y \times \Omega \rightarrow \calX_G$ to define the equivariant family of measures we need. Given almost every point $x \in \Omega$, we can define the \emph{slice associated to the point $x$} as the map
$$
\psi_x:Y \rightarrow \calX_G, \hspace{5pt} \psi_x(a):=\psi(a,x) \ .
$$
Since $\Omega$ is a standard Borel space, by \cite[Lemma 2.6]{fisher:morris:whyte} it follows that the map $\psi_x$ is measurable for almost every $x \in \Omega$. Additionally the equivariance of the map $\psi$ implies the following relation on the slices
$$
\psi_{\gamma.x}(\gamma \ \cdot \ )=\sigma(\gamma,x)\psi_x(\ \cdot \ ) \ ,
$$
for every $\gamma \in \Gamma$ and almost every $x \in \Omega$. If now $\{ \nu_b \}_{b \in \calX_G}$ is the Patterson-Sullivan family defined in Section \ref{sec:patterson:sullivan}, by fixing a number $s>h(Y)$, we can define
\begin{equation}\label{eq:measure:convolution}
\mu^s_{a,x}:=((\psi_x)_\ast(\mu^s_a)) \ast \{\nu_b\}_{b \in \calX_G} \ ,
\end{equation}
where $a \in Y$, $x \in \Omega$ and $\mu^s_a$ is the measure defined in Equation (\ref{eq:equivariant:family}). In a similar way for the convolution appearing in Section \ref{sec:bar:natural}, the convolution of Equation (\ref{eq:measure:convolution}) is defined as follows
$$
\mu^s_{a,x}(U):=\int_{\calX_G} \nu_b(U)d((\psi_x)_\ast(\mu^s_a))(b)=\int_{Y}\nu_{\psi_x(z)}(U)d\mu^s_a(z) \ , 
$$
for every measurable subset $U \subseteq B(G)$. 

We are going now to prove that the family $\{ \mu^s_{a,x} \}_{a \in Y,x \in \Omega}$ is equivariant with respect to the $\sigma$-action.

\begin{lem}\label{lem:equivariant:family}
Let $\Gamma=\pi_1(N)$ be the fundamental group of a closed $n$-dimensional Riemannian manifold $N$ and let $Y$ be its universal cover. Fix $(\Omega,\mu_\Omega)$ a standard Borel probability $\Gamma$-space. 
Suppose $\sigma:\Gamma \times \Omega \rightarrow G$ is a measurable cocycle with measurable equivariant map $\psi:Y \times \Omega \rightarrow \calX_G$ which is $\sigma$-equivariant. Then the family of measures $\{ \mu^s_{a,x} \}_{a \in Y,x \in \Omega}$ defined by Equation (\ref{eq:measure:convolution}) is supported on the Furstenberg boundary $B(G)$ and it is $\sigma$-equivariant, that is 
$$
\mu^s_{\gamma.a,\gamma.x}=\sigma(\gamma,x)_\ast(\mu^s_{a,x}) \ ,
$$
for every $\gamma \in \Gamma$ and almost every $a \in Y$ and $x \in \Omega$. 
\end{lem}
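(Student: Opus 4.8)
The plan is to verify the two claimed properties of the family $\{\mu^s_{a,x}\}$ separately, both being essentially bookkeeping once the right objects are in place. First, the support statement: for fixed $a\in Y$ and (almost every) $x\in\Omega$, the measure $\mu^s_{a,x}$ is by definition the convolution $\int_Y \nu_{\psi_x(z)}\,d\mu^s_a(z)$, i.e. a "superposition" of Patterson--Sullivan measures $\nu_b$ with $b=\psi_x(z)$ ranging over points of $\calX_G$. Since each $\nu_b$ is supported exactly on the Furstenberg--Poisson boundary $B(G)$ (recalled in Section \ref{sec:patterson:sullivan}), and $B(G)$ does not depend on the basepoint $b$, any mixture of such measures is again supported on $B(G)$. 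Concretely, for a Borel set $U\subseteq\partial_\infty\calX_G$ disjoint from $B(G)$ we get $\nu_{\psi_x(z)}(U)=0$ for every $z$, hence $\mu^s_{a,x}(U)=0$; and $\mu^s_{a,x}(B(G))=\int_Y \nu_{\psi_x(z)}(B(G))\,d\mu^s_a(z)=\int_Y 1\,d\mu^s_a(z)=1$. So $\mu^s_{a,x}$ is a probability measure fully supported on $B(G)$, which is precisely what is needed to apply the barycenter construction of Definition \ref{def:barycenter:measure}.

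Second, the equivariance. I would test $\mu^s_{\gamma.a,\gamma.x}$ against an arbitrary Borel subset $U\subseteq B(G)$ by unwinding the definition of the convolution and then using, in this order: (i) the equivariance of the Riemannian family $\mu^s_a$ on $Y$, namely $\mu^s_{\gamma.a}=\gamma_\ast(\mu^s_a)$ (which holds because $d_Y$ is $\Gamma$-invariant, as recorded around Equation (\ref{eq:equivariant:family})); (ii) the slice relation $\psi_{\gamma.x}(\gamma.z)=\sigma(\gamma,x)\psi_x(z)$ coming from $\sigma$-equivariance of $\psi$; and (iii) the $G$-equivariance of the Patterson--Sullivan density, $\nu_{g.b}=g_\ast\nu_b$. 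Explicitly,
\begin{align*}
\mu^s_{\gamma.a,\gamma.x}(U)
&=\int_Y \nu_{\psi_{\gamma.x}(z)}(U)\,d\mu^s_{\gamma.a}(z)
=\int_Y \nu_{\psi_{\gamma.x}(\gamma.z)}(U)\,d\mu^s_a(z)\\
&=\int_Y \nu_{\sigma(\gamma,x)\psi_x(z)}(U)\,d\mu^s_a(z)
=\int_Y \big(\sigma(\gamma,x)_\ast\nu_{\psi_x(z)}\big)(U)\,d\mu^s_a(z)\\
&=\int_Y \nu_{\psi_x(z)}\big(\sigma(\gamma,x)^{-1}U\big)\,d\mu^s_a(z)
=\mu^s_{a,x}\big(\sigma(\gamma,x)^{-1}U\big)
=\big(\sigma(\gamma,x)_\ast\mu^s_{a,x}\big)(U),
\end{align*}
where in the second equality I substituted $z\mapsto\gamma.z$ using (i). Since $U$ was arbitrary this gives $\mu^s_{\gamma.a,\gamma.x}=\sigma(\gamma,x)_\ast(\mu^s_{a,x})$ for every $\gamma\in\Gamma$ and almost every $(a,x)$, as claimed.

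The one point that deserves a little care — and which I'd expect to be the only genuine subtlety rather than a real obstacle — is the measure-theoretic bookkeeping of the "almost every" quantifiers. The slice relation $\psi_{\gamma.x}(\gamma\,\cdot\,)=\sigma(\gamma,x)\psi_x(\,\cdot\,)$ holds for each fixed $\gamma$ only for a.e.\ $x$ (and a.e.\ $a$), and the cocycle identity for $\sigma$ likewise holds a.e.; since $\Gamma$ is countable, intersecting these conull sets over $\gamma\in\Gamma$ still leaves a conull set of $x$ on which the computation above is valid simultaneously for all $\gamma$. One should also note that $\mu^s_{a,x}$ depends measurably on $(a,x)$ so that the family is a bona fide measurable assignment $Y\times\Omega\to\calM^1(B(G))$; this follows from measurability of $\psi$, of $(b,U)\mapsto\nu_b(U)$, and Fubini. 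After these remarks the proof is complete, so I would simply assemble the two computations above into the write-up, flagging the countability argument for the quantifiers.
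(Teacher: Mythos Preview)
Your proof is correct and follows essentially the same approach as the paper: both verify the support claim by noting that a convolution against the Patterson--Sullivan family inherits support on $B(G)$, and both establish equivariance by testing against an arbitrary Borel set $U$ and applying, in order, the $\Gamma$-equivariance of $\{\mu^s_a\}$, the change of variables, the slice relation for $\psi$, and the $G$-equivariance of the Patterson--Sullivan density. Your write-up is in fact slightly more careful than the paper's (you make the ``almost every'' bookkeeping and the countability-of-$\Gamma$ argument explicit, and you integrate consistently over $Y$), but the underlying argument is identical.
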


\begin{proof}
Since $\mu^s_{a,x}$ is defined via the convolution with the Patterson-Sullivan family $\{ \nu_b \}_{b \in \calX_G}$ and each of these measures is supported on the Furstenberg-Poisson boundary $B(G)$, the same holds for $\mu^s_{a,x}$. 

We now prove that the family is equivariant. Consider a measurable subset $U \subseteq B(G)$. Then for every $\gamma \in \Gamma$ and almost every $a \in Y$ and $x \in \Omega$ we have
\begin{align*}
\mu^s_{\gamma.a,\gamma.x}(U)&=\int_{\calX_G} \nu_{\psi_{\gamma.x}(b)}(U)d\mu^s_{\gamma.a}(b)=\\ 
&=\int_{\calX_G} \nu_{\psi_{\gamma.x}(b)}(U)d((\gamma_\ast)(\mu^s_a))(b)=\\
&=\int_{\calX_G} \nu_{\psi_{\gamma.x}(\gamma b)}(U)d\mu^s_x(b)=\\
&=\int_{\calX_G} \nu_{\sigma(\gamma,x)\psi_x(b)}(U)d\mu^s_x(b)=\\
&=\int_{\calX_G} (\sigma(\gamma,x)_\ast)(\nu_{\psi_x(b)})(U)d\mu^s_x(b)=(\sigma(\gamma,x)_\ast)(\mu^s_{a,x})(U) \ , 
\end{align*}
where to move from the first line to the second one we use the equivariance of the family $\{\mu^s_a\}_{a \in Y}$, to pass from the second line to the third one we use the direct image theorem, to move from the third line to the fourth one we apply the $\sigma$-equivariance of $\psi$ and finally we use again the equivariance of the Patterson-Sullivan family. The statement now follows. 
\end{proof}

Thanks to the previous lemma we can now prove the existence of natural maps. 

\begin{proof}[Proof of Theorem \ref{teor:natural:map}]
Since by assumption we have a measurable map $\psi:Y \times \Omega \rightarrow \calX_G$ which is $\sigma$-equivariant, we can define the family of measures $\{ \mu^s_{a,x} \}_{a \in Y,x \in \Omega}$ given by Equation (\ref{eq:measure:convolution}). 

For every $s>h(Y)$, we can define the following map
$$
F^s:Y \times \Omega \rightarrow \calX_G \ , 
$$
$$
F^s(a,x):=\barb(\mu^s_{a,x}) = \barb \left( \left( \int_{Y} e^{-sd_Y(a,z)-h(\calX_G)\beta_o(\psi(z,x),\xi)}d\mu(z) \right) d\nu_o(\xi) \right) \ .
$$
Clearly $F^s$ is a well-defined map since the support of the measure $\mu^s_{a,x}$ is the Furstenberg-Poisson boundary because we define it using the convolution with the Patterson-Sullivan family $\{ \nu_b \}_{b \in \calX_G}$. As a consequence of Lemma \ref{lem:equivariant:family} we know that the family $\{ \mu^s_{a,x} \}_{a \in Y, x \in \Omega}$ is $\sigma$-equivariant. The equivariance property implies that
\begin{align*}
F^s(\gamma.a,\gamma.x)&=\barb(\mu^s_{\gamma.a,\gamma.x})=\\
&=\barb(\sigma(\gamma,x)_\ast(\mu^s_{a,x}))=\\
&=\sigma(\gamma,x)\barb(\mu^s_{a,x})=\sigma(\gamma,x)F^s(a,x) \ ,
\end{align*}
for every $\gamma \in \Gamma$ and almost every $a \in Y$ and $x \in \Omega$. This implies the equivariance of $F^s$. 
Now for almost every $x \in \Omega$ we define the \emph{slice associated to the point $x$} as $F^s_x:Y \rightarrow\calX_G, \hspace{5pt} F^s_x(a):=F^s(a,x)$. Since $\Omega$ is a standard Borel space, by \cite[Lemma 2.6]{fisher:morris:whyte} it follows that the function $\widehat{F}^s:\Omega \rightarrow \textup{Meas}(Y,\calX_G)$ is measurable, and hence $F^s_x$ is measurable for almost every $x \in \Omega$. 

We are going to prove that for almost every $x \in \Omega$ the map $F^s_x$ has actually more regularity. Recall that the implicit Equation (\ref{eq:implicit:barycenter}) satisfied by the barycenter, in this particular context, it becomes
\begin{equation}\label{eq:natural:implicit}
\int_{B(G)} d\beta_o|_{(F^s_x(a),\xi)} (\ \cdot \ )d\mu^s_{a,x}(\xi)=0 \ .
\end{equation}
Following either Besson-Courtois-Gallot \cite{bcg95,bcg96,bcg98} or Connell-Farb \cite{connellfarb2,connellfarb1} we have that the implicit equation above implies that the map $F^s_x$ is actually differentiable for almost every $x \in \Omega$.  

The last thing we want to prove is the uniform bound on the Jacobian of $F^s_x$. We are going to follow the line of the proof of \cite[Theorem A]{connellfarb2}. To do this we need to differentiate again Equation (\ref{eq:natural:implicit}). In this way, for almost every $x \in \Omega$ and every $a \in Y$ we obtain
\begin{align}\label{eq:implicit:differentiated}
&\int_{B(G)} \nabla d\beta_o|_{(F^s_x(a),\xi)}(D_aF^s_x(u),v)d\mu^s_{a,x}(\xi)=\\
s\int_{Y} &\int_{B(G)} d\beta_o|_{(F^s_x(a),\xi)}(v) \cdot \langle \grad_a d_Y(a,z), u \rangle d\nu_{\psi_x(z)}(\xi)d\mu^s_a(z) \nonumber \ ,
\end{align}
where $u \in T_aY, v \in T_{F_x^s(a)}\calX_G$ and $\psi_x:Y \rightarrow \calX_G$ is the slice of $\psi$ associated to $x \in \Omega$. Here $\nabla$ is the Levi-Civita connection associated to the standard Riemannian metric on $\calX_G$ and $\grad_a$ is the Riemannian gradient computed at the point $a \in Y$. If we now consider the determinant of Equation (\ref{eq:implicit:differentiated}) we get
\begin{equation}\label{eq:implicit:determinant}
\jac_a F^s_x= s^n \frac{\det \left(\int_{Y} \int_{B(G)} d\beta_o|_{(F^s_x(a),\xi)}( \ \cdot \ ) \cdot \langle \grad_a d_Y(a,z),\ \cdot \ \rangle d\nu_{\psi_x(z)}(\xi) d\mu^s_a(z)\right)}{\det \left(\int_{B(G)} \nabla d\beta_o|_{(F^s_x(a),\xi)}(\ \cdot \ ,\ \cdot \ )d\mu^s_{a,x}(\xi)\right)}
\end{equation}
where on both the nominator and the denominator we considered the determinant of the bilinear forms which appear in Equation (\ref{eq:implicit:differentiated}). By applying the Cauchy-Schwarz inequality with respect to the numerator of the right-hand side of Equation (\ref{eq:implicit:determinant}) we get
\begin{equation}\label{eq:implicit:cauchy}
\jac_a F^s_x \leq s^n \frac{\det \left(\int_{B(G)} \left(d\beta_o|_{(F^s_x(a),\xi)}( \ \cdot \ ) \right)^2d\mu^s_{a,x}(\xi)\right)^{\frac{1}{2}} \det \left(\int_{Y} \langle \grad_a d_Y(a,z), \ \cdot \ \rangle^2d\mu^s_{a}(z)\right)^{\frac{1}{2}}}{\det \left(\int_{B(G)} \nabla d\beta_o|_{(F^s_x(a),\xi)}(\ \cdot \ ,\ \cdot \ )d\mu^s_{a,x}(\xi)\right)}
\end{equation}

Since the trace satisfies $\tr \langle \grad_a d_Y(a,z), \ \cdot \ \rangle^2=1$ outside a measure zero set, it holds 
$$
\det \left(\int_{Y} \langle \grad_a d_Y(a,z), \ \cdot \ \rangle^2d\mu^s_{a}(z)\right)^{\frac{1}{2}} \leq \left(\frac{1}{\sqrt{n}}\right)^n \ ,
$$
Inequality (\ref{eq:implicit:cauchy}) boils down to
\begin{equation}\label{eq:implicit:bilinear:forms}
\jac_a F^s_x \leq \left(\frac{s}{\sqrt{n}}\right)^n \frac{\det \left(\int_{B(G)} \left(d\beta_o|_{(F^s_x(a),\xi)}( \ \cdot \ ) \right)^2d\mu^s_{a,x}(\xi)\right)^{\frac{1}{2}}}{\det \left(\int_{B(G)} \nabla d\beta_o|_{(F^s_x(a),\xi)}(\ \cdot \ ,\ \cdot \ )d\mu^s_{a,x}(\xi)\right)} \ .
\end{equation}

Now, following the same computation of \cite[Section 4.2]{connellfarb2} one can prove that without loss of generality it is possible to assume $G$ irreducible and then the desired estimate follows now by \cite{connellfarb1,connellfarb4}, as desired. 
\end{proof}

\begin{oss}
It is worth noticing that for constructing the $\sigma$-equivariant family $\{ \mu^s_{a,x} \}_{a \in Y, x \in \Omega}$ and hence for defining the map $F^s:Y \times \Omega \rightarrow \calX_G$ we exploited the existence of a measurable map $\psi:Y \times \Omega \rightarrow \calX_G$ and not of a boundary map as in \cite{savini4:articolo}.
\end{oss}

So far we have shown that, given a measurable cocycle $\sigma:\Gamma \times \Omega \rightarrow G$ which admits a measurable $\sigma$-equivariant map $\psi: Y \times \Omega \rightarrow \calX_G$, for every $s>h(Y)$, there exists a map 
$$
F^s:Y \times \Omega \rightarrow \calX_G \ ,
$$ 
which is $\sigma$-equivariant and its slices are differentiable. It is quite natural to ask what can happen if $\sigma$ is actually a measurable cocycle induced by a representation $\rho:\Gamma \rightarrow G$. More precisely one could ask which relation exists between the natural map defined in Theorem \ref{teor:natural:map} and the natural map defined by Connell-Farb \cite{connellfarb2}. This is exactly the content of the following

\begin{prop}\label{prop:natural:rep}
Let $\Gamma=\pi_1(N)$ be the fundamental group of a closed Riemannian manifold $N$ whose universal cover is $Y$. Consider $\rho:\Gamma \rightarrow G$ a representation. Let $\psi:Y \rightarrow \calX_G$ be a measurable $\rho$-equivariant map. Denote by $F^s:Y \rightarrow \calX_G$ and by $\sigma_\rho:\Gamma \times \Omega \rightarrow \calX_G$ the natural map and the measurable cocycle associated to $\rho$, respectively. Then for every $s>h(Y)$ the natural map associated to $\sigma_\rho$ is given by
$$
\widetilde{F}^s:Y \times \Omega \rightarrow \calX_G, \hspace{5pt} \widetilde{F}^s(a,x):=F^s(a) \ .
$$
\end{prop}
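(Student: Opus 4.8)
The plan is to unwind the definitions and observe that, since the cocycle $\sigma_\rho$ does not depend on the $\Omega$-coordinate, none of the ingredients entering the proof of Theorem \ref{teor:natural:map} depends on $\Omega$ either, so each slice of $\widetilde{F}^s$ reproduces the Connell-Farb map $F^s$.

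First I would record that, by Definition \ref{def:cocycle:rep}, one has $\sigma_\rho(\gamma,x)=\rho(\gamma)$ for almost every $x\in\Omega$, so $\sigma_\rho$ is constant along $\Omega$. Given the measurable $\rho$-equivariant map $\psi:Y\rightarrow\calX_G$, define $\Psi:Y\times\Omega\rightarrow\calX_G$ by $\Psi(a,x):=\psi(a)$. This is measurable, and for every $\gamma\in\Gamma$ and almost every $(a,x)\in Y\times\Omega$,
$$
\Psi(\gamma.a,\gamma.x)=\psi(\gamma.a)=\rho(\gamma)\psi(a)=\sigma_\rho(\gamma,x)\Psi(a,x),
$$
so $\Psi$ is $\sigma_\rho$-equivariant in the sense of Definition \ref{def:equivariant:map}. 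Hence $\Psi$ is an admissible input for the construction of Theorem \ref{teor:natural:map}, with slices $\Psi_x=\psi$ for almost every $x\in\Omega$.

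Next I would compare the two families of measures. Fixing $s>h(Y)$ and using Equation (\ref{eq:measure:convolution}), for almost every $x\in\Omega$,
$$
\mu^s_{a,x}=((\Psi_x)_\ast(\mu^s_a))\ast\{\nu_b\}_{b\in\calX_G}=((\psi)_\ast(\mu^s_a))\ast\{\nu_b\}_{b\in\calX_G}=\lambda^s_a,
$$
where $\lambda^s_a$ is precisely the measure of Equation (\ref{eq:convolution:measure}) in the Connell-Farb construction. In particular $\mu^s_{a,x}$ is independent of $x$ and supported on $B(G)$, so applying the barycenter yields
$$
\widetilde{F}^s(a,x)=\barb(\mu^s_{a,x})=\barb(\lambda^s_a)=F^s(a)
$$
for every $a\in Y$ and almost every $x\in\Omega$, which is the claimed identity.

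The remaining assertions are then immediate: $\sigma_\rho$-equivariance of $\widetilde{F}^s$ follows as in the proof of Theorem \ref{teor:natural:map}, or directly from $\widetilde{F}^s(\gamma.a,\gamma.x)=F^s(\gamma.a)=\rho(\gamma)F^s(a)=\sigma_\rho(\gamma,x)\widetilde{F}^s(a,x)$ using the $\rho$-equivariance of $F^s$; and every slice of $\widetilde{F}^s$ equals the single map $F^s$, which Connell-Farb showed is smooth with Jacobian bounded by the constant $C$ of Theorem \ref{teor:natural:map}. I do not expect a genuine obstacle here; the only point deserving care is to make explicit that the natural map of Theorem \ref{teor:natural:map} depends on the chosen equivariant map, so the statement is to be read as: the construction applied to the constant family $\Psi(a,x)=\psi(a)$ reproduces, slicewise, the Connell-Farb map associated to $\psi$.
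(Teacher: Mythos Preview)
Your proof is correct and follows essentially the same approach as the paper: define the constant-in-$\Omega$ equivariant map $\Psi(a,x)=\psi(a)$, observe that its slices all equal $\psi$ so that $\mu^s_{a,x}=\lambda^s_a$, and conclude by applying the barycenter. Your write-up is slightly more detailed (you make the identification with $\lambda^s_a$ explicit and add the closing remarks on equivariance and the Jacobian bound), but the argument is the same.
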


\begin{proof}
Starting from the map $\psi:Y \rightarrow \calX_G$ we can define the following measurable map
$$
\widetilde{\psi}:Y \times \Omega \rightarrow \calX_G, \hspace{5pt} \widetilde{\psi}(a,x):=\psi(a) \ ,
$$
which is clearly $\sigma_\rho$-equivariant, since $\psi$ is $\rho$-equivariant. In particular for every $x \in \Omega$ we have the equality $\psi_x=\psi$. By applying the definition which appears in the proof of Theorem \ref{teor:natural:map} we have that
\begin{align*}
\widetilde{F}^s(a,x)&=\barb( ((\psi_x)_\ast(\mu^s_a)) \ast \{ \nu_b \}_{b \in \calX_G})=\\
&=\barb( (\psi)_\ast(\mu^s_a)) \ast \{ \nu_b \}_{b \in \calX_G})=F^s(a) \ ,
\end{align*}
and the statement follows. 
\end{proof}

The proposition above can be compared with \cite[Proposition 3.2]{savini4:articolo}, which should be interpreted as analogous statement for rank-one Lie groups. We conclude the section by showing how the natural map $F^s$ change along the $G$-cohomology class of a fixed measurable cocycle (compare with \cite[Proposition 3.3]{savini4:articolo}). 

\begin{prop}\label{prop:natural:cohomology}
Let $\Gamma=\pi_1(N)$ be the fundamental group of a closed Riemannian manifold $N$ whose universal cover is $Y$. Let $\sigma:\Gamma \times \Omega \rightarrow G$ be a measurable cocycle with measurable $\sigma$-equivariant map $\psi:Y \times \Omega \rightarrow \calX_G$. Then, given a measurable map $f: \Omega \rightarrow G$, the natural map associated to the cocycle $f.\sigma$ is given by
$$
f.F^s:Y \times \Omega \rightarrow \calX_G, \hspace{5pt} (f.F^s)(a,x)=f(x)^{-1}F^s(a,x) \ .
$$ 
\end{prop}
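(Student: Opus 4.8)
The plan is to run the construction from the proof of Theorem \ref{teor:natural:map} using as input the equivariant map
$$
f.\psi:Y \times \Omega \rightarrow \calX_G, \hspace{5pt} (f.\psi)(a,x):=f(x)^{-1}\psi(a,x) \ ,
$$
which is $f.\sigma$-equivariant, as noted at the end of Section \ref{sec:zimmer:cocycle}. By definition the natural map associated to $f.\sigma$ is then $(f.F^s)(a,x)=\barb(\widetilde{\mu}^s_{a,x})$, where $\widetilde{\mu}^s_{a,x}=(((f.\psi)_x)_\ast(\mu^s_a)) \ast \{\nu_b\}_{b \in \calX_G}$. The first step is to observe that $(f.\psi)_x=f(x)^{-1} \circ \psi_x$ as maps $Y \to \calX_G$, where $f(x)^{-1}$ acts as an isometry of $\calX_G$; hence $((f.\psi)_x)_\ast(\mu^s_a)=(f(x)^{-1})_\ast((\psi_x)_\ast(\mu^s_a))$.

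The second step is to rewrite the convolution. For a measurable $U \subseteq B(G)$, the direct image theorem gives
$$
\widetilde{\mu}^s_{a,x}(U)=\int_{\calX_G} \nu_b(U)\,d\big((f(x)^{-1})_\ast(\psi_x)_\ast\mu^s_a\big)(b)=\int_{\calX_G} \nu_{f(x)^{-1}b}(U)\,d\big((\psi_x)_\ast\mu^s_a\big)(b) \ .
$$
Using the $G$-equivariance $\nu_{g.b}=g_\ast\nu_b$ of the Patterson-Sullivan density recalled in Section \ref{sec:patterson:sullivan}, we get $\nu_{f(x)^{-1}b}(U)=\nu_b(f(x)U)$, so that
$$
\widetilde{\mu}^s_{a,x}(U)=\int_{\calX_G} \nu_b(f(x)U)\,d\big((\psi_x)_\ast\mu^s_a\big)(b)=\mu^s_{a,x}(f(x)U)=\big((f(x)^{-1})_\ast\mu^s_{a,x}\big)(U) \ ,
$$
where $\mu^s_{a,x}$ is the measure of Equation (\ref{eq:measure:convolution}) built from $\psi$. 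In particular $\widetilde{\mu}^s_{a,x}$ is still supported on the $G$-invariant set $B(G)$, so its barycenter is well-defined.

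The third and final step is to apply the $G$-equivariance of the barycenter from Section \ref{sec:bar:natural}:
$$
(f.F^s)(a,x)=\barb(\widetilde{\mu}^s_{a,x})=\barb\big((f(x)^{-1})_\ast\mu^s_{a,x}\big)=f(x)^{-1}\barb(\mu^s_{a,x})=f(x)^{-1}F^s(a,x) \ ,
$$
for almost every $a \in Y$ and $x \in \Omega$, which is the assertion. I do not expect a genuine obstacle here: the proof is essentially bookkeeping, and the only points demanding care are the convention $(g_\ast\nu)(U)=\nu(g^{-1}U)$ for pushforwards and the implicit fact that, since the natural map depends on the choice of equivariant map, the statement concerns the natural map built from $f.\psi$.
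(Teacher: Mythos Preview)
Your proof is correct and follows the same approach as the paper: start from the $f.\sigma$-equivariant map $f.\psi$, identify the resulting family of measures as $(f(x)^{-1})_\ast\mu^s_{a,x}$, and conclude by the $G$-equivariance of the barycenter. The paper's argument is more compressed---it jumps directly from $\barb(((f.\psi_x)_\ast\mu^s_a)\ast\{\nu_b\})$ to $f(x)^{-1}\barb(((\psi_x)_\ast\mu^s_a)\ast\{\nu_b\})$ citing only the equivariance of the barycenter---whereas you spell out the intermediate fact that the convolution itself is $G$-equivariant via the equivariance of the Patterson--Sullivan density.
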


\begin{proof}
If $\psi:Y \times \Omega \rightarrow \calX_G$ is a measurable $\sigma$-equivariant map, then the map 
$$
f.\psi:Y \times \Omega \rightarrow \calX_G, \hspace{5pt} (f.\psi)(a,x):=f(x)^{-1}\psi(a,x) \ ,
$$
defined for almost every $a \in Y$ and $x \in \Omega$, is clearly measurable and $f.\sigma$-equivariant. 

Using the definition of natural map we have 
\begin{align*}
(f.F^s)(a,x)&=\barb( ((f.\psi_x)_\ast (\mu^s_a)) \ast \{\nu_b \}_{b \in \calX_G})=\\ 
&=f(x)^{-1}\barb(((\psi_x)_\ast (\mu^s_a)) \ast \{\nu_b \}_{b \in \calX_G}) =f(x)^{-1}F^s(a,x) \ ,
\end{align*}
where we used the $G$-equivariance of the barycenter (see Section \ref{sec:bar:natural}) to pass from the first line to the second one. Hence the claim follows. 
\end{proof}

\section{Volume of equivariant maps}\label{sec:volume}

Let $G$ be a semisimple Lie group of non-compact factor and denote by $\calX_G$ the Riemannian symmetric space associated to it. Suppose $\textup{dim}(\calX_G)=n$. Consider a closed $n$-dimensional complete Riemannian manifold $N$ with fundamental group $\Gamma=\pi_1(N)$ and universal cover $Y$. Fix $(\Omega,\mu_\Omega)$ a standard Borel probability $\Gamma$-space. Given a measurable cocycle $\sigma:\Gamma \times \Omega \rightarrow G$, in this section we are going to deal with a measurable $\sigma$-equivariant map $\Phi:Y \times \Omega \rightarrow \calX_G$. Under suitable hypothesis on such a map, we are going to define the notion of volume associated it. To properly define this notion we will need to assume a uniform bound on the Jacobian of the slices. This will allow to consider the pullback of the volume form on $\calX_G$ and to integrate it  first along the probability space $\Omega$ and then on the manifold $N$. The volume of equivariant map will enable us to state a degree theorem for equivariant maps similar to the one given by \cite[Proposition 1.3]{savini4:articolo}. 

Let $\Phi:Y \times \Omega \rightarrow \calX_G$ be a measurable $\sigma$-equivariant map. For almost every $x \in \Omega$ we define the \emph{slice associated to $x$}, $\Phi_x: Y \rightarrow \calX_G$ and we are going to assume that these maps are smooth for almost every $x \in \Omega$. Hence it makes sense to speak about the Jacobian $\jac_aF_x$ for every $a \in \calX_G$. We are going to say that $\Phi$ is \emph{essentially bounded}, or it has \emph{essentially bounded slices}, if there exists $C>0$ such that 
$$
\jac_a \Phi_x < C \ ,
$$
for every $a \in \calX_G$ and almost every $x \in \Omega$. Assume now that $\Phi$ is essentially bounded. If we denote by $\omega_G$ and $\omega_Y$ the Riemannian volume forms on $\calX_G$ and $Y$, respectively, then, imitating what the author did in \cite{savini4:articolo}, we can consider 
$$
\omega_x:=\Phi^\ast_x \omega_G = \textup{Jac}_a\Phi_x \cdot \omega_Y \ ,
$$
for almost every $x \in \Omega$. Since the Jacobian is uniformly bounded and $\Omega$ is a probability space, we can consider the integral
$$
\widehat{\omega}_\Phi:=\int_\Omega \omega_x d\mu_\Omega(x) . 
$$ 
More precisely, given a $n$-tuple $\{ u_1,\ldots,u_n\}$ of vectors in $T_aY$, we have
\begin{align*}
\widehat{\omega}_\Phi(u_1,\ldots,u_n):&=\int_\Omega \omega_x(u_1,\ldots,u_n)d\mu_\Omega(x)=\\
&=\int_{\Omega} \omega_G(D_a\Phi_x(u_1),\ldots,D_a\Phi_x(u_n))d\mu_\Omega(x) \ . 
\end{align*}

The same strategy exposed in \cite[Section 4]{savini4:articolo} shows that the form $\widehat{\omega}_\Phi$ is a smooth $\Gamma$-invariant differential form on $Y$ and hence it induces a differential form $\omega_\Phi \in \Omega^n(N)$. This allows us to give the following

\begin{deft}
Let $\Gamma$ be the fundamental group of a closed $n$-dimensional Riemannian manifold $N$ whose universal cover is $Y$. Fix a standard Borel probability $\Gamma$-space $(\Omega,\mu_\Omega)$. If we have $\sigma:\Gamma \times \Omega \rightarrow G$ a measurable cocycle, we denote the set
\begin{align*}
\mathscr{D}(\sigma):=\{ \Phi: Y \times \Omega \rightarrow \calX_G| &\text{$\Phi$ essentially bounded $\sigma$-equivariant map}\\
& \text{with differentiable slices} \}
\end{align*}
Given an element $\Phi \in \mathscr{D}(\sigma)$ we define the \emph{volume of the map $\Phi$} as 
$$
\vol(\Phi):=\int_{N} \omega_\Phi=\int_{N} \int_\Omega \omega_x d\mu_\Omega(x) \ .
$$
\end{deft}

\begin{oss}
In a similar way for what happens in \cite{savini4:articolo} for rank-one Lie groups, if a measurable cocycle $\sigma: \Gamma \times \Omega \rightarrow G$ admits a measurable map $\psi:Y \times \Omega \rightarrow \calX_G$ which is $\sigma$-equivariant, then the set $\mathscr{D}(\sigma)$ is not empty. Indeed, for $s>h(Y)$, the map $F^s:Y \times \Omega \rightarrow \calX_G$ has differentiable slices by Theorem \ref{teor:natural:map}. Additionally, by the same statement, we know that there exists a uniform $C>0$ such that
$$
\jac_aF_x^s \leq C \left(\frac{s}{h(\calX_G)}\right)^n \ ,
$$
for every $a \in Y$ and almost every $x \in \Omega$.  This means exactly that the map $F^s$ is essentially bounded and hence $F^s \in \mathscr{D}(\sigma)$. Moreover, the bound on the Jacobian implies
\begin{equation}\label{volume:inequality}
\vol(F^s) \leq C \left( \frac{s}{h(\calX_G)} \right)^n \vol(N) \ .
\end{equation}
\end{oss}

\begin{oss}
In the particular case when $\Gamma$ is a lattice in $G$ and the cocycle is cohomologous to a representation $\rho:\Gamma \rightarrow G$, the Estimate \ref{volume:inequality} can be improved. Indeed, in that case, by Proposition \ref{prop:natural:rep} the volume of $F^s$ boils down to the volume of the representation discussed in \cite{kim:kim:14} and for that volume it holds
$$
\vol(F^s)=\vol(\rho) \leq \vol(\Gamma \backslash \calX_G). 
$$
By \cite[Theorem 1.2]{kim:kim:14} we know that the equality is attained if and only if the representation is discrete and faithful. As a consequence we get that 
$$
\vol(F^s)=\vol(\Gamma \backslash \calX_G) \ ,
$$
for the natural map $F^s$ associated to a Zariski dense cocycle $\sigma:\Gamma \times \Omega \rightarrow \calX_G$ (recall that such a cocycle is cohomologous to a discrete and faithful representation by Zimmer Superrigidity Theorem \cite{zimmer:annals}). 
\end{oss}

We want to conclude the section by showing a suitable version of mapping degree theorem for measurable equivariant maps associated to cocycles. In order to do this we are going to follow both \cite[Section 6]{moraschini:savini} and \cite[Section 5]{savini4:articolo} to introduce the notion of \emph{pullback of a measurable cocycle along a continuous map}. Let $N,M$ be a closed $n$-dimensional Riemannian manifolds with fundamental groups $\Gamma=\pi_1(N), \Lambda=\pi_1(M)$ and universal covers $Y,X$, respectively. Let $f:N \rightarrow M$ be a smooth function with non-vanishing degree. Suppose that the Jacobian of $f$ is uniformly bounded. For instance this is the case when $\Lambda$ is a torsion-free uniform lattice in the isometry group of a non-positively curved symmetric space by the existence of natural maps. We denote by $\pi_1(f):\Gamma \rightarrow \Lambda$ the induced map on the fundamental groups. Given a measurable cocycle $\sigma:\Lambda \times \Omega \rightarrow G$, where $\Omega$ is the usual standard Borel probability $\Lambda$-space, we can define the pullback cocycle as follows
$$
f^\ast \sigma: \Gamma \times \Omega \rightarrow G, \hspace{5pt} f^\ast \sigma(\gamma,x):=(\pi_1(f)(\gamma),x) \ ,
$$
where the structure of $\Gamma$-space on $\Omega$ is induced by $\pi_1(f)$. As proved in \cite[Lemma 6.1]{moraschini:savini} the previous cocycle is well-defined. 

Let $\widetilde{f}:Y \rightarrow X$ the lift of the map $f$ to the universal covers. The existence of such a map allows to consider the pullback of measurable $\sigma$-equivariant map with respect to the continuous map $f$. More precisely, given an element $\Phi \in \mathscr{D}(\sigma)$ we can define the following map
$$
f^\ast \Phi: Y \times \Omega \rightarrow \calX_G, \hspace{5pt} f^\ast \Phi(a,x):=\Phi(\widetilde{f}(a),x) \ , 
$$
for every $a \in Y$ and almost every $x \in \Omega$. Notice that $f^\ast \Phi \in \mathscr{D}(f^\ast \sigma)$ by the boundedness assumption on the Jacobian of $f$, and hence it has a well-defined volume. 

Having introduced all the notion we need, we are ready to prove the main proposition. 

\begin{proof}[Proof of Proposition \ref{prop:degree:map}]
The proof follows the line of \cite[Proposition 1.3]{savini4:articolo}. Let $\omega_N$ and $\omega_M$ the volume form associated to the Riemannian structure on $N$ and $M$, respectively. By changing suitably the orientation of either $N$ or $M$, we can suppose without loss of generality that the degree $\deg(f)$ is positive. 

By definition of volume of equivariant maps, we have
\begin{align*}
\vol(f^\ast \Phi)&=\int_{N} \int_\Omega (f^\ast \Phi)_x^\ast \omega_G \ d\mu_\Omega(x)=\int_N \left( \int_\Omega \jac_a (f^\ast \Phi_x)d\mu_\Omega(x) \right) \omega_N = \\
&=\int_N \jac_a f \left( \int_\Omega \jac_{\widetilde{f}(a)}\Phi_x d\mu_\Omega(x) \right) \omega_N \ ,
\end{align*}
where we used the equivariance of the map $\widetilde{f}:Y \rightarrow X$ to move from the first line to the second one. If we now apply the coarea formula we obtain
\begin{align*}
\int_N \jac_a f \left( \int_\Omega \jac_{\widetilde{f}(a)}\Phi_x d\mu_\Omega(x) \right) \omega_M&=\int_M \calN(b) \left( \int_\Omega \jac_b\Phi_x d\mu_\Omega(x) \right) \omega_M \geq \\
&\geq \deg(f) \cdot \int_M \left(\int_\Omega \jac_b \Phi_x d\mu_\Omega(x) \right) \omega_M = \\
&= \deg(f) \cdot \vol(\Phi) \ ,
\end{align*}
where we denoted by $\calN(b)$ the cardinality of the set
$$
\calN(b)=\textup{card}((\widetilde{f})^{-1}(b)) \ .
$$
The statement now follows. 
\end{proof}


\bibliographystyle{amsalpha}
\bibliography{biblionote}

\end{document}